\newtheorem{theorem}{Theorem}\numberwithin{theorem}{section}
\newtheorem{lemma}[theorem]{Lemma}
\newtheorem{corollary}[theorem]{Corollary}
\newtheorem*{proposition*}{Proposition}
\newtheorem{notation}[theorem]{Notation}
\numberwithin{theoremm}{subsection}
\numberwithin{theoremmm}{subsubsection}
\theoremstyle{remark}
\newcommand{\C}{\operatorname{C}}
\newcommand{\F}{\operatorname{F}}
\renewcommand{\F}{\operatorname{F}}
\newcommand{\IZ}{\mathbb{Z}}
\begin{document}

\title{Approximability of word maps by homomorphisms}

\author{Alexander Bors\thanks{University of Salzburg, Mathematics Department, Hellbrunner Stra{\ss}e 34, 5020 Salzburg, Austria. \newline E-mail: \href{mailto:alexander.bors@sbg.ac.at}{alexander.bors@sbg.ac.at} \newline The author is supported by the Austrian Science Fund (FWF):
Project F5504-N26, which is a part of the Special Research Program \enquote{Quasi-Monte Carlo Methods: Theory and Applications}. \newline 2010 \emph{Mathematics Subject Classification}: 20D60. \newline \emph{Key words and phrases:} Finite groups, Word maps, Word equations.}}

\date{\today}

\maketitle

\abstract{Generalizing a recent result of Mann, we show that there is an explicit function $f:\left(0,1\right]\rightarrow\left(0,1\right]$ such that for every reduced word $w$, say in $d$ variables, there is an explicit reduced word $v$ in at most $3d$ variables (nontrivial if the length of $w$ is at least $2$) such that for all $\rho\in\left(0,1\right]$, the following holds: If $G$ is any finite group for which the word map $w_G:G^d\rightarrow G$ agrees with some fixed homomorphism $G^d\rightarrow G$ on at least $\rho|G|^d$ many arguments, then the word map $v_G:G^{3d}\rightarrow G$ has a fiber of size at least $f(\rho)|G|^{3d}$. We also discuss some applications of this result.}

\section{Introduction}\label{sec1}

Various authors have studied finite groups $G$ with an automorphism mapping a certain minimum proportion of elements of $G$ to their $e$-th power, for $e\in\{-1,2,3\}$ fixed. Some notable results in this context are the following: If $G$ has an automorphism inverting (resp.~squaring, cubing) more than $\frac{3}{4}|G|$ (resp.~$\frac{1}{2}|G|$, $\frac{3}{4}|G|$) many elements of $G$, then $G$ is abelian, see \cite{GPa} (resp.~\cite[Theorem 3.5]{Lie73a}, \cite[Theorem 4.1]{Mac75a}), and if $G$ has an automorphism inverting (resp.~squaring, cubing) more than $\frac{4}{15}|G|$ (resp.~$\frac{7}{60}|G|$, $\frac{4}{15}|G|$) many elements of $G$, then $G$ is solvable, see \cite[Corollary 3.2]{Pot88a} (resp.~\cite[Theorem C]{Heg03a}, \cite[Theorem 4.1]{Heg09a}). In \cite{Bor16a}, the author studied finite groups with an automorphism inverting, squaring or cubing at least $\rho|G|$ many elements, for a fixed, but arbitrary $\rho\in\left(0,1\right]$, and recently, in \cite{Man17a}, Mann proved the following, yielding an approach that works under the weaker assumption where the word \enquote{automorphism} is replaced by \enquote{endomorphism} and consists in rewriting the assumption into a nontrivial lower bound on the proportion of solutions to a certain word equation in three variables over $G$:

\begin{theorem}\label{mannTheo}(Mann, \cite[Theorem 9]{Man17a})
There is a function $f:\left(0,1\right]\rightarrow\left(0,1\right]$ such that for all $e\in\IZ$, all $\rho\in\left(0,1\right]$ and all finite groups $G$: If $G$ has an endomorphism $\varphi$ such that $\varphi(x)=x^e$ for at least $\rho|G|$ many $x\in G$, then the word equation $(xyz)^e=x^ey^ez^e$ has at least $f(\rho)|G|^3$ many solutions over $G$.\qed
\end{theorem}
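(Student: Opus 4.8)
The plan is to reduce the statement to a counting argument over the set $E_\varphi = \{x \in G : \varphi(x) = x^e\}$, which by hypothesis has size at least $\rho|G|$. The key observation is that whenever $x, y, z$ all lie in $E_\varphi$, the homomorphism property of $\varphi$ gives $\varphi(xyz) = \varphi(x)\varphi(y)\varphi(z) = x^e y^e z^e$, so a solution of $(xyz)^e = x^e y^e z^e$ is guaranteed as soon as we also know $(xyz)^e = \varphi(xyz)$, i.e.\ as soon as $xyz \in E_\varphi$. Thus the quantity to bound from below is the number of triples $(x,y,z) \in E_\varphi^3$ with $xyz \in E_\varphi$, equivalently the number of triples $(x,y,z) \in E_\varphi^3$ with $xyz \in E_\varphi$; it suffices to show this is at least $f(\rho)|G|^3$ for a suitable $f$.

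First I would set $A = E_\varphi$ and $|A| = \alpha|G|$ with $\alpha \geq \rho$, and estimate $N = |\{(x,y,z) \in A^3 : xyz \in A\}|$. Writing $N = \sum_{x,y,z \in A} \mathbf{1}_{A}(xyz)$, I would compare this with the \enquote{expected} value $\alpha|A|^2 = \alpha^3|G|^2$ using a second-moment or Cauchy--Schwarz estimate: for each pair $(x,y)$, the number of $z \in A$ with $xyz \in A$ is $|A \cap (xy)^{-1}A|$, and summing $|A \cap gA|$ over $g$ in a set of size $|A|^2/|G|$-on-average is controlled by $\sum_{g \in G} |A \cap gA| = |A|^2$. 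The cleanest route is to note $\sum_{g \in G} |A \cap gA|^2$ counts quadruples and apply Cauchy--Schwarz to get a lower bound on $N$ in terms of $|A|$ alone; alternatively, one can invoke a Plünnecke-- or Ruzsa-type inequality, or simply the elementary fact that a subset of density $\alpha$ in a finite group has $\Omega(\alpha^?)|G|^3$ additive/multiplicative triples landing back in the set. A fully self-contained argument: since $\sum_g |A \cap gA| = |A|^2$ and each term is at most $|G|$, the set $S = \{g : |A\cap gA| \geq \tfrac12 \alpha |A|\}$ has $|S| \geq \tfrac12 \alpha^2 |G|$ by averaging; then restrict $(x,y)$ to the (large) preimage structure forcing $xy \in S^{-1}$ — but this needs $xy$ to be well-distributed, which follows again from the same $\sum_g |A \cap gA| = |A|^2$ identity applied to the pair variable. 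Carrying this through yields $N \geq c(\rho)|G|^3$ with an explicit $c$, say $c(\rho) = \rho^? / \text{const}$.

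The main obstacle I anticipate is obtaining a clean, explicitly-computable lower bound on $N$ that depends only on $\rho$ and not on $e$ or the structure of $G$ — in particular handling the worst case where $A$ is a subgroup or coset-like set, where $N/|G|^3$ can be as small as $\rho^2$ or so, and making sure the double-averaging does not lose too much. A secondary point to be careful about is that $e$ can be negative or zero; for $e = 0$ the word equation $(xyz)^0 = x^0y^0z^0$ is $1 = 1$, trivially satisfied by all $|G|^3$ triples, and for negative $e$ the identity $\varphi(x) = x^e$ on $A$ still gives $\varphi(xyz) = x^ey^ez^e$ for $x,y,z \in A$ by the same homomorphism computation, so the reduction is uniform in $e$. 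Finally I would record that the function $f$ produced is explicit (a fixed power of $\rho$ divided by a constant), matching the statement, and note that this is exactly the mechanism that the rest of the paper will generalize by replacing the specific word $xyz$ and exponent map by an arbitrary reduced word $w$.
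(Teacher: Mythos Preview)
Your reduction is exactly right: every triple $(x,y,z)\in A^3$ with $xyz\in A$ (where $A=E_\varphi$) yields a solution of $(xyz)^e=x^ey^ez^e$, so it suffices to bound
\[
N=|\{(x,y,z)\in A^3:xyz\in A\}|.
\]
The gap is in the counting. All of the arguments you sketch (Cauchy--Schwarz via $\sum_g|A\cap gA|^2$, the averaging on $\sum_g|A\cap gA|=|A|^2$, the \enquote{elementary fact} that a dense set has many multiplicative quadruples) treat $A$ as an arbitrary dense subset of $G$, and for arbitrary $A$ the conclusion is simply false. For instance, in $G=\IZ/3\times\IZ/n$ take $A=(\{1\}\times\IZ/n)\cup\{0\}$: then $|A|/|G|\to 1/3$, yet three elements of $\{1\}\times\IZ/n$ sum into $\{0\}\times\IZ/n$, which meets $A$ only at $0$, so $N=O(|G|^2)=o(|G|^3)$. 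Concretely, $\sum_g|A\cap gA|^2$ counts quadruples $(a,b,c,d)\in A^4$ with $ab^{-1}=cd^{-1}$; this equals $N$ only after the substitution $b\mapsto b^{-1}$, which stays inside $A$ precisely when $A=A^{-1}$.

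The missing observation, which repairs everything, is that $A=E_\varphi$ \emph{is} symmetric: if $\varphi(x)=x^e$ then $\varphi(x^{-1})=\varphi(x)^{-1}=x^{-e}=(x^{-1})^e$. With $A=A^{-1}$ one gets $N=\sum_g r(g)^2$ (the multiplicative energy, where $r(g)=|\{(a,b)\in A^2:ab=g\}|$), and Cauchy--Schwarz gives $N\geq|A|^4/|G|\geq\rho^4|G|^3$; so $f(\rho)=\rho^4$ works.

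For comparison, the paper does not prove Theorem~\ref{mannTheo} directly but obtains it from Theorem~\ref{mainTheo} with $w=X_1^e$. There one counts triples $(s,t,u)\in A^3$ with $s^{-1}tu\in A$; this quantity \emph{is} the multiplicative energy for every $A$ (no symmetry needed), and the substitution $x:=s^{-1}$ on $G^3$ turns each such triple into a solution of $(xyz)^e=x^ey^ez^e$. The paper then bounds the energy not by Cauchy--Schwarz but via the combinatorial Lemma~\ref{setLem} on pairwise intersections of translates $sA$, obtaining the explicit $f=f_1f_2$ of Notation~\ref{fNot}. Your Cauchy--Schwarz route, once the symmetry of $A$ is recorded, is shorter and gives the better constant $\rho^4$; the paper's inversion trick has the advantage that it extends verbatim to the general word $w$ in Theorem~\ref{mainTheo}, where the coincidence set $S=\{\vec g:\varphi(\vec g)=w(\vec g)\}$ need not be symmetric.
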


The aim of this note is to generalize Mann's approach, allowing for the use of results on word equations to study a larger class of problems; we will also give an exemplary application of this more general method. Recall that to every (reduced) word $w\in\F(X_1,\ldots,X_d)$ and every group $G$, there is associated a \emph{word map} $w_G:G^d\rightarrow G$, induced by substitution.

\begin{theorem}\label{mainTheo}
There is a function $f:\left(0,1\right]\rightarrow\left(0,1\right]$ such that for all words $w\in\F(X_1,\ldots,X_d)$, all $\rho\in\left(0,1\right]$ and all finite groups $G$: If there is a homomorphism $\varphi:G^d\rightarrow G$ such that $|\{\vec{g}\in G^d\mid\varphi(\vec{g})=w(\vec{g})\}|\geq\rho|G|^d$, then the following word equation in $3d$ pairwise distinct variables $x_i,y_i,z_i$, $i=1,\ldots,d$, has at least $f(\rho)|G|^{3d}$ many solutions over $G$:

\[
w(x_1^{-1}y_1z_1,\ldots,x_d^{-1}y_dz_d)=w(x_1,\ldots,x_d)^{-1}w(y_1,\ldots,y_d)w(z_1,\ldots,z_d).
\]
\end{theorem}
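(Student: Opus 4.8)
The plan is to use the fact that, although the word map $w_G:G^d\rightarrow G$ need not be a homomorphism, by hypothesis it agrees with the homomorphism $\varphi$ on the set $S:=\{\vec g\in G^d\mid\varphi(\vec g)=w(\vec g)\}$, which has size at least $\rho|G|^d$, and that the displayed word equation is nothing but the homomorphism identity for $\varphi$ with $w_G$ substituted for $\varphi$. Write $H:=G^d$ and $n:=|H|=|G|^d$, denote a generic solution of the word equation by a triple $(\vec x,\vec y,\vec z)\in H^3$ (with $\vec x=(x_1,\ldots,x_d)$, and similarly for $\vec y,\vec z$), and note that $\vec x^{-1}\vec y\vec z:=(x_1^{-1}y_1z_1,\ldots,x_d^{-1}y_dz_d)$ is just the product of $\vec x^{-1},\vec y,\vec z$ in the group $H$. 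Then the left-hand side of the equation is $w_G(\vec x^{-1}\vec y\vec z)$ and the right-hand side is $w_G(\vec x)^{-1}w_G(\vec y)w_G(\vec z)$.

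The key observation is that if all four of $\vec x$, $\vec y$, $\vec z$ and $\vec x^{-1}\vec y\vec z$ lie in $S$, then
\[
w_G(\vec x^{-1}\vec y\vec z)=\varphi(\vec x^{-1}\vec y\vec z)=\varphi(\vec x)^{-1}\varphi(\vec y)\varphi(\vec z)=w_G(\vec x)^{-1}w_G(\vec y)w_G(\vec z),
\]
where the middle equality uses only that $\varphi$ is a homomorphism from $H=G^d$ to $G$. Hence the number of solutions of the word equation is at least $|\{(\vec x,\vec y,\vec z)\in S^3\mid\vec x^{-1}\vec y\vec z\in S\}|$, and it remains to bound this quantity from below by $f(\rho)n^3$ for a suitable $f$ depending on $\rho$ alone.

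This last step is the only substantive one, and it is a routine additive-energy estimate. Rewriting the condition $\vec x^{-1}\vec y\vec z=\vec w$ as $\vec y\vec z=\vec x\vec w$, the quantity above equals $|\{(\vec x,\vec y,\vec z,\vec w)\in S^4\mid\vec y\vec z=\vec x\vec w\}|=\sum_{g\in H}r(g)^2$, where $r(g):=|\{(\vec a,\vec b)\in S^2\mid\vec a\vec b=g\}|$. Since $\sum_{g\in H}r(g)=|S|^2$ and $r$ vanishes outside the $n$-element set $H$, the Cauchy--Schwarz inequality gives $\sum_{g\in H}r(g)^2\geq|S|^4/n\geq\rho^4n^3=\rho^4|G|^{3d}$. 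So the theorem holds with the explicit choice $f(\rho):=\rho^4$, which indeed depends neither on $w$, nor on $d$, nor on $G$.

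I do not expect a genuine obstacle here: once one recognizes the word equation as the homomorphism identity for $\varphi$ transported along the (only partial) agreement $\varphi=w_G$, everything reduces to the counting lemma above, and the only points requiring attention are to impose all four membership conditions $\vec x,\vec y,\vec z,\vec x^{-1}\vec y\vec z\in S$ simultaneously and to carry out the Cauchy--Schwarz step over the group $G^d$ rather than over $G$.
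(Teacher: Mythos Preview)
Your proof is correct. The overall strategy coincides with the paper's: set $S=\{\vec g\in G^d\mid\varphi(\vec g)=w_G(\vec g)\}$, observe that any triple $(\vec x,\vec y,\vec z)\in S^3$ with $\vec x^{-1}\vec y\vec z\in S$ yields a solution of the word equation (via the homomorphism identity for $\varphi$), and then bound the number of such triples from below purely in terms of $\rho$.

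The difference lies in how that last count is obtained. The paper invokes a combinatorial set-intersection lemma (its Lemma~2.2, an extension of an earlier result of the author) applied to the family of translates $(\vec s S)_{\vec s\in S}$ inside $G^d$; this produces the explicit function $f=f_1\cdot f_2$ of Notation~2.1, which for small $\rho$ behaves roughly like a constant times $\rho^7$. You instead recognize the count as the multiplicative energy $E(S)=\sum_{g\in G^d} r(g)^2$ with $r(g)=|\{(\vec a,\vec b)\in S^2:\vec a\vec b=g\}|$, and apply Cauchy--Schwarz to get $E(S)\geq |S|^4/|G|^d\geq\rho^4|G|^{3d}$. This is both shorter and yields the sharper explicit bound $f(\rho)=\rho^4$. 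What the paper's approach buys in exchange is the isolation of Lemma~2.2 as a standalone combinatorial statement (which the author explicitly advertises as of independent interest); your argument bypasses that lemma entirely but does not produce such a byproduct.
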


Mann's Theorem \ref{mannTheo} is the special case where $w=X_1^e$ for some $e\in\IZ$ (note that the inversion from Theorem \ref{mainTheo} can be removed in that special case through the substitution $x_1\mapsto x_1^{-1}$). Using Theorem \ref{mainTheo} and recent results on fibers of word maps (see \cite{LS17a}), one gets:

\begin{corollary}\label{mainCor}
Let $w\in\F(X_1,\ldots,X_d)$ be a reduced word of length at least $2$. Then for all $\rho\in\left(0,1\right]$, the orders of the nonabelian composition factors of a finite group $G$ such that $w_G$ agrees with some homomorphism $G^d\rightarrow G$ on at least $\rho|G|^d$ many arguments are bounded in terms of $w$ and $\rho$.
\end{corollary}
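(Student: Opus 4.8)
The plan is to feed the hypothesis on $w_G$ into Theorem~\ref{mainTheo} and then apply the Larsen--Shalev bounds on fibers of word maps from \cite{LS17a}. Write $k:=3d$, and let $v$ be the word, in the $3d$ pairwise distinct variables $x_i,y_i,z_i$, obtained by moving the right-hand side of the word equation in Theorem~\ref{mainTheo} over to the left; then that equation asserts precisely that $v_G$ has a fiber (namely $v_G^{-1}(1)$) of size at least $f(\rho)|G|^k$. The first point to settle -- and this is where the hypothesis $\length(w)\ge 2$ is essentially used -- is that $v$ does not reduce to the empty word of $\F(x_1,\ldots,z_d)$; the hypothesis is genuinely needed, since for $\length(w)\le 1$ the map $w_G$ is itself a homomorphism for every finite $G$ and no such bound can hold. (One checks, incidentally, that $v$ has trivial image in the abelianization $\IZ^{3d}$, so it vanishes on abelian groups, consistently with this.) Granting $v\neq 1$, Corollary~\ref{mainCor} reduces to the purely group-theoretic assertion: \emph{for every nontrivial word $v$ in $k$ variables and every $c\in\left(0,1\right]$ there is a constant $C=C(v,c)$ such that every finite group $G$ for which $v_G$ has a fiber of size at least $c|G|^k$ has all of its nonabelian composition factors of order at most $C$.}

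To prove this I would first note that \enquote{$v_G$ has a fiber of size $\ge c|G|^k$} passes to quotients: for $N\trianglelefteq G$ the canonical map $G^k\to(G/N)^k$ is $|N|^k$-to-one and, as word maps commute with homomorphisms, sends each fiber of $v_G$ into a fiber of $v_{G/N}$, so $v_{G/N}$ again has a fiber of relative size $\ge c$. Now fix a nonabelian composition factor $T$ of $G$ and realise it as a composition factor of a chief factor $G_1/G_0$ of $G$ (so $G_0,G_1\trianglelefteq G$, $G_0\le G_1$, and $G_1/G_0\cong T^a$ for some $a\ge 1$); replacing $G$ by $G/G_0$, we may assume that $N:=T^a$ is a minimal normal subgroup of $G$ and that $v_G$ still has a fiber of relative size $\ge c$. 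A pigeonhole argument provides a coset $\vec{c}N^k$ of $N^k$ in $G^k$ containing at least a $c$-fraction of that fiber; collecting the coset representatives to the front (the standard expansion of a word map of $G$ along the extension $1\to N\to G\to G/N\to 1$) rewrites this as a fiber, of relative size $\ge c$, of a \emph{twisted product} $\widetilde\psi\colon N^k\to N$, $\vec{n}\mapsto\prod_{t=1}^{\ell}\beta_t(n_{i_t}^{\varepsilon_t})$, where $v=X_{i_1}^{\varepsilon_1}\cdots X_{i_\ell}^{\varepsilon_\ell}$ is the reduced form of $v$ ($\ell=\length(v)$) and the $\beta_t\in\Aut(N)$ depend on $\vec{c}$. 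Projecting the output onto a single $T$-factor of $N=T^a$ gives a map that depends on at most $\ell$ of the $T$-coordinates of the argument, hence a twisted word map $T^{\ell'}\to T$ with $\ell'\le\ell$ which still has a fiber of relative size $\ge c$; identifying $T$ with $\Inn(T)\trianglelefteq\Aut(T)$, this becomes an ordinary word map over $\Aut(T)$, a group exceeding $T$ only by the bounded factor $|\Out(T)|$. The fiber bounds of \cite{LS17a} then force $|T|\le C(v,c)$.

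The step I expect to be the real obstacle is this last one -- showing that the twisted word maps produced in this way genuinely have only small fibers once $|T|$ is large. Two things need care: first, that for a suitable choice of coset and of $T$-factor the extracted twisted word map $T^{\ell'}\to T$ is \emph{not} identically trivial -- here one uses that $v$ is reduced and that, for $|T|$ large in terms of $v$, $v$ is not a law of $T$ (again a consequence of the fiber bounds), but one must rule out the possibility that a degenerate coset, on which the twisted map collapses, produces a spurious large fiber; and second, that the Larsen--Shalev bound holds in the twisted (equivalently, characteristically simple) setting with an exponent $\varepsilon(v)>0$ independent of the twist -- the passage to $\Aut(T)$ is meant to reduce this to the untwisted bound, but it really requires the results of \cite{LS17a} to be stated at that level of generality. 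If \cite{LS17a} already provides a fiber bound for word maps over arbitrary finite groups governed by the order of the largest nonabelian composition factor, then this entire reduction is superfluous and Corollary~\ref{mainCor} is immediate from Theorem~\ref{mainTheo}, with the resulting bound depending on $v$ and $f(\rho)$, hence on $w$ and $\rho$, as required.
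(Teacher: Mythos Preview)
Your approach is essentially the paper's: apply Theorem~\ref{mainTheo} to obtain a large fiber of the auxiliary word $v$, check $v\neq 1$, then invoke \cite{LS17a}. Two remarks. First, your suspicion in the final sentence is correct and is exactly what the paper does: \cite[Theorem~1.1]{LS17a} already bounds, for any nontrivial $v\in\F(X_1,\ldots,X_k)$ and any finite group $G$, the maximal fiber size of $v_G$ by $|G|^{k-\varepsilon(v)}$ whenever the nonabelian composition factors of $G$ are large enough, so your entire chief-factor/twisted-word-map reduction is superfluous (and the delicate issues you flag about degenerate cosets and twisted fiber bounds need not be faced). Second, you grant $v\neq 1$ without proving it; the paper supplies the one-line argument: writing $w=X_{i_1}^{\varepsilon_1}\cdots X_{i_\ell}^{\varepsilon_\ell}$ reduced with $\ell\geq 2$, the rightmost letter of $w(x_1^{-1}y_1z_1,\ldots,x_d^{-1}y_dz_d)$ is $z_{i_\ell}^{\varepsilon_\ell}$ if $\varepsilon_\ell=1$ and $x_{i_\ell}$ if $\varepsilon_\ell=-1$, whereas the leftmost letter of $w(z_1,\ldots,z_d)^{-1}$ is $z_{i_\ell}^{-\varepsilon_\ell}$, and since $\ell\geq 2$ the adjacent letters (one a $y$-letter, the other a $z$-letter) prevent full cancellation of the block $(x_{i_\ell}^{-1}y_{i_\ell}z_{i_\ell})^{\varepsilon_\ell}$.
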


\section{Proof of Theorem \ref{mainTheo}}\label{sec2}

Our proof of Theorem \ref{mainTheo} actually yields an explicit example of such a function $f$: One can take $f:=f_1\cdot f_2$, where $f_1,f_2:\left(0,1\right]\rightarrow\left(0,1\right]$ are given by Notation \ref{fNot} below. We note that in principle, Mann's counting argument in \cite[proof of Theorem 9]{Man17a} gives an explicit example of a function $f$ as in Theorem \ref{mannTheo} and that essentially the same argument can be applied to give an explicit proof of Theorem \ref{mainTheo}. Still, we will give a slightly different argument, for two reasons: Firstly, Mann's argument simplifies the situation a bit by replacing two quantities by (asymptotic) approximations, and to get a correct value for $f(\rho)$ (not just an approximation to a correct value) one would have to go through some, as Mann says himself, \enquote{tedious} computations. Secondly, our alternative approach allows us to highlight Lemma \ref{setLem} below, which we think is, in spite of its elementarity, an interesting result in its own right.

Let us first introduce the two functions $f_1$ and $f_2$ that make $f$ from Theorem \ref{mainTheo} explicit:

\begin{notation}\label{fNot}
We introduce the following functions $f_1,f_2:\left(0,1\right]\rightarrow\left(0,1\right]$:

\begin{enumerate}
\item $f_1(\rho):=\min\{\rho^2/(12\lceil 2\rho^{-1}\rceil),\rho^3/(4\lceil 2\rho^{-1}\rceil)\}$, for each $\rho\in\left(0,1\right]$.
\item $f_2(\rho):=\rho/(\lceil 2\rho^{-1}\rceil\cdot(\lceil 2\rho^{-1}\rceil+1))$, for each $\rho\in\left(0,1\right]$.
\end{enumerate}
\end{notation}

Our proof of Theorem \ref{mainTheo} relies on the following combinatorial lemma, already mentioned above and an extension of \cite[Lemma 2.1.2]{Bor16a}:

\begin{lemma}\label{setLem}
For all $\rho\in\left(0,1\right]$, all finite sets $X$ and all families $(M_i)_{i\in I}$ of subsets of $X$ such that $|M_i|\geq\rho|X|$ for all $i\in I$ and $|I|\geq\rho|X|$, there are at least $f_1(\rho)|X|^2$ many pairs $(i_1,i_2)\in I^2$ such that $|M_{i_1}\cap M_{i_2}|\geq f_2(\rho)|X|$.
\end{lemma}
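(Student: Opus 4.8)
The plan is to run a double-counting argument on the incidences between $X$ and the index set $I$, controlled by the two parameters $\rho$ and $m:=\lceil 2\rho^{-1}\rceil$. First I would pass to a cleaner situation by truncating: replace $I$ by a subset $I'$ with $|I'|=\lceil\rho|X|\rceil$ (possible since $|I|\ge\rho|X|$), so that we count pairs inside $I'^2$, which only makes the claimed bound harder and removes any dependence on how large $|I|$ is. The target is then to show that the number $N$ of pairs $(i_1,i_2)\in I'^2$ with small intersection, $|M_{i_1}\cap M_{i_2}|<f_2(\rho)|X|$, is strictly less than $|I'|^2-f_1(\rho)|X|^2$, so that the complementary set of ``good'' pairs has size at least $f_1(\rho)|X|^2$.

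The core estimate counts triples $(x,i_1,i_2)$ with $x\in M_{i_1}\cap M_{i_2}$ in two ways. Summing over $x\in X$ gives $\sum_{x\in X}r(x)^2$, where $r(x):=|\{i\in I'\mid x\in M_i\}|$; by Cauchy--Schwarz this is at least $(\sum_x r(x))^2/|X|=(\sum_{i\in I'}|M_i|)^2/|X|\ge(\rho|X|\cdot|I'|)^2/|X|=\rho^2|X||I'|^2$. On the other hand, splitting the sum over pairs into ``good'' pairs (contributing at most $|X|$ each) and ``bad'' pairs (contributing at most $f_2(\rho)|X|$ each) gives $\sum_{x}r(x)^2\le G'|X|+B'f_2(\rho)|X|$, where $G'$ and $B'=|I'|^2-G'$ are the numbers of good and bad pairs. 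Rearranging yields a lower bound $G'\ge\bigl(\rho^2|X||I'|^2-f_2(\rho)|X||I'|^2\bigr)/\bigl((1-f_2(\rho))|X|\bigr)$ for the number of good pairs; here one must also be a little careful to throw in (or separately handle) the diagonal pairs $(i,i)$, whose intersection $|M_i|\ge\rho|X|$ is automatically large as long as $f_2(\rho)\le\rho$, which the definition of $f_2$ ensures. The remaining work is purely arithmetic: using $|I'|\ge\rho|X|$ and the explicit formula $f_2(\rho)=\rho/(m(m+1))$, one checks that $\rho^2-f_2(\rho)$ is bounded below by a positive quantity comparable to $\rho^2/m$, and hence $G'\ge f_1(\rho)|X|^2$ for the stated $f_1$; the two branches in the definition of $f_1$ (the $\rho^2$-term and the $\rho^3$-term) presumably correspond to which of the two estimates $|I'|\ge\rho|X|$ or $|I'|=\lceil\rho|X|\rceil\le\rho|X|+1$ is binding, i.e.\ to the large-$|X|$ regime versus small-$|X|$ boundary cases.

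The main obstacle I anticipate is not conceptual but bookkeeping: making sure every inequality survives the replacement of $|I'|$ by the integer $\lceil\rho|X|\rceil$ and that the diagonal contributes correctly, so that the final constant is \emph{exactly} $f_1(\rho)$ rather than merely of the right order of magnitude --- this is precisely the ``tedious'' part that the authors allude to when contrasting their argument with Mann's asymptotic one. A secondary point to get right is the degenerate case where $|X|$ is so small relative to $\rho$ that $\rho|X|<1$; then $|I|\ge\rho|X|$ forces $I$ to be either empty or, once nonempty, to have $|I|\ge1>\rho|X|$, and one should check the claimed bound holds trivially (for instance because $f_1(\rho)|X|^2\le 1$ whenever $|X|$ is small enough, so a single good diagonal pair suffices). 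Once these edge cases are dispatched, the Cauchy--Schwarz plus double-counting skeleton gives the result directly.
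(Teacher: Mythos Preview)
Your Cauchy--Schwarz/second-moment plan is sound and would prove the lemma, but it is \emph{not} the route the paper takes. The paper argues by a case split on the size of $|X|$: for $|X|<4\lceil2\rho^{-1}\rceil\rho^{-1}$ it uses only the diagonal pairs $(i,i)$ (this is where the $\rho^3/(4\lceil2\rho^{-1}\rceil)$ branch of $f_1$ comes from), while for large $|X|$ it invokes an external result, \cite[Lemma~2.1.2(3)]{Bor16a}, guaranteeing that any index set of size at least $\tfrac{\rho}{2}|X|$ contains a ``hub'' index with at least $\tfrac{\rho}{4\lceil2\rho^{-1}\rceil}|X|$ good partners; removing this hub and iterating for roughly $\tfrac{\rho}{3}|X|$ steps produces $\tfrac{\rho^2}{12\lceil2\rho^{-1}\rceil}|X|^2$ good pairs (the other branch of $f_1$). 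So your guess that the two branches reflect small versus large $|X|$ is right in spirit, but the mechanism is the diagonal-versus-recursion dichotomy in the paper's argument, not a choice between two estimates for $|I'|$ in yours.

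Your approach is in fact cleaner and self-contained: it avoids both the external lemma and the recursion, and the inequality you obtain, $G'\ge(\rho^2-f_2(\rho))|I'|^2/(1-f_2(\rho))$, combined only with $|I'|\ge\rho|X|$, already gives $G'\ge\tfrac{3}{4}\rho^4|X|^2$, which dominates $f_1(\rho)|X|^2$ uniformly (since $f_1(\rho)\le\rho^3/(4\lceil2\rho^{-1}\rceil)\le\rho^4/8$). Two small corrections to your plan: the quantity $\rho^2-f_2(\rho)$ is comparable to $\rho^2$, not to $\rho^2/m$ (since $f_2(\rho)\le\rho^3/4$); and neither the truncation of $I$ nor the separate small-$|X|$ edge case is actually needed---the double-counting works verbatim for all $|X|\ge1$ with the bare hypothesis $|I|\ge\rho|X|$.
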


\begin{proof}
We make a case distinction.

\begin{itemize}
\item Case 1: $|X|<4\lceil 2\rho^{-1}\rceil\rho^{-1}$. Then, using that for all $i\in I$,

\[
|M_i\cap M_i|=|M_i|\geq\rho|X|\geq f_2(\rho)|X|,
\]

we see that for at least

\[
|I|=\frac{1}{|I|}{|I|^2}\geq\frac{1}{4\lceil 2\rho^{-1}\rceil\rho^{-1}}\cdot\rho^2|X|^2\geq f_1(\rho)|X|^2
\]

many pairs $(i_1,i_2)\in I^2$, we have $|M_{i_1}\cap M_{i_2}|\geq f_2(\rho)|X|$, as required.

\item Case 2: $|X|\geq 4\lceil 2\rho^{-1}\rceil\rho^{-1}$. Set $\epsilon:=\rho/2$; we will be using the fact (see \cite[Lemma 2.1.2(3)]{Bor16a} and note that the necessary inequality $|I'|\geq 2\lceil\epsilon^{-1}\rceil=2\lceil2\rho^{-1}\rceil$ holds by our case assumption on $|X|$ and the assumption on $|I'|$ below) that for all subsets $I'\subseteq I$ such that $|I'|\geq\epsilon|X|=\frac{\rho}{2}|X|$, there is an $i'\in I'$ such that for at least $\frac{1}{2\lceil\epsilon^{-1}\rceil}|I'|\geq\frac{\rho}{4\lceil2\rho^{-1}\rceil}|X|$ many $j'\in I'\setminus\{i'\}$, we have

\[
|M_{i'}\cap M_{j'}|\geq\frac{2\epsilon}{\lceil\epsilon^{-1}\rceil\cdot(\lceil\epsilon^{-1}\rceil+1)}|X|=f_2(\rho)|X|.
\]

Set $I_0:=I$. Assume that we have already defined, for some $t\leq\frac{\rho}{3}|X|+1$, a strictly decreasing chain of index sets $I_0\supset I_1\supset\cdots\supset I_t$ and indices $i_0,\ldots,i_{t-1}\in I$ such that for $k=1,\ldots,t$, $I_k=I_{k-1}\setminus\{i_{k-1}\}$, and such that for $k=0,\ldots,t-1$, there are at least $\frac{\rho}{4\lceil 2\rho^{-1}\rceil}|X|$ many $j\in I_k$ with $|M_{i_{k-1}}\cap M_j|\geq f_2(\rho)|X|$. Then since

\[
|I_t|=|I|-t\geq\rho|X|-(\frac{\rho}{3}|X|+1)\geq\rho|X|-\frac{\rho}{2}|X|=\epsilon|X|;
\]

we conclude that there is also $i_t\in I_t$ such that for at least $\frac{\rho}{4\lceil 2\rho^{-1}\rceil}|X|$ many $j\in I_t\setminus\{i_t\}$, we have $|M_{i_t}\cap M_j|\geq f_2(\rho)|X|$. Hence we can set $I_{t+1}:=I_t\setminus\{i_t\}$ to proceed with the recursion.

Altogether, this yields at least

\[
\frac{\rho}{3}|X|\cdot\frac{\rho}{4\lceil 2\rho^{-1}\rceil}|X|=\frac{\rho^2}{12\lceil 2\rho^{-1}\rceil}|X|^2\geq f_1(\rho)|X|^2
\]

many pairs $(i_1,i_2)\in I^2$ such that $|M_{i_1}\cap M_{i_2}|\geq f_2(\rho)|X|$ in this case as well.
\end{itemize}
\end{proof}

\begin{proof}[Proof of Theorem \ref{mainTheo}]
Fix a homomorphism $\varphi:G^d\rightarrow G$ such that $S:=\{\vec{g}\in G^d\mid\varphi(\vec{g})=w_G(\vec{g})\}$ has size at least $\rho|G|^d$. We apply Lemma \ref{setLem} to the family $(\vec{s}S)_{\vec{s}\in S}$ of subsets of $X:=G^d$ to conclude that there are at least $f_1(\rho)|G|^{2d}$ many pairs $(\vec{s},\vec{t})\in S^2$ such that

\[
|S\cap \vec{s}^{-1}\vec{t}S|=|\vec{s}S\cap\vec{t}S|\geq f_2(\rho)|G|^d.
\]

Hence for at least $f_1(\rho)f_2(\rho)|G|^{3d}$ many triples $(\vec{s},\vec{t},\vec{u})\in S^3$, we have $\vec{s}^{-1}\vec{t}\vec{u}\in S$, and so, writing $\vec{a}=(a_1,\ldots,a_d)$ for $a\in\{s,t,u\}$,

\begin{align*}
w_G(s_1^{-1}t_1u_1,\ldots,s_d^{-1}t_du_d) &=w_G(\vec{s}^{-1}\vec{t}\vec{u})=\varphi(\vec{s})^{-1}\varphi(\vec{t})\varphi(\vec{u})=w_G(\vec{s})^{-1}w_G(\vec{t})w_G(\vec{u}) \\
&=w_G(s_1,\ldots,s_d)^{-1}w_G(t_1,\ldots,t_d)w_G(u_1,\ldots,u_d).
\end{align*}
\end{proof}

\section{Applications of Theorem \ref{mainTheo}}

We give two applications of Theorem \ref{mainTheo} in this note. The first is Corollary \ref{mainCor}, the proof of which is easy now:

\begin{proof}[Proof of Corollary \ref{mainCor}]
By our assumption that $w$ is reduced of length at least $2$, the word equation $w(x_1^{-1}y_1z_1,\ldots,x_d^{-1}y_dz_d)=w(x_1,\ldots,x_d)^{-1}w(y_1,\ldots,y_d)w(z_1,\ldots,z_d)$ is nontrivial (when moving the symbols from the right-hand side to the left-hand side by corresponding right multiplications, the last factor $(x_i^{-1}y_iz_i)^{\pm1}$ on the left-hand side does not fully cancel). Hence the assertion follows from \cite[Theorem 1.1]{LS17a} (more precisely, from the fact, implied by \cite[Theorem 1.1]{LS17a}, that the orders of the nonabelian composition factors of a finite group $G$ such that, for a given nontrivial reduced word $v\in\F(X_1,\ldots,X_t)$, the word map $v_G$ has a fiber of size at least $\rho|G|^t$, are bounded in terms of $v$ and $\rho$).
\end{proof}

The second application illustrates the use of Theorem \ref{mainTheo} for studying a concrete example where one can say more about the structure of $G$ than Corollary \ref{mainCor} does. The \emph{commuting probability} of a finite group $G$ is the probability that two independently uniformly randomly chosen elements of $G$ commute (in other words, it is the number $|\{(g,h)\in G^2\mid gh=hg\}|/|G|^2$):

\begin{corollary}\label{sideCor}
Let $\rho\in\left(0,1\right]$. A finite group $G$ whose group multiplication, viewed as a function $G^2\rightarrow G$, agrees with some homomorphism $G^2\rightarrow G$ on at least $\rho|G|^2$ many arguments has commuting probability at least $\frac{f_1(\rho)f_2(\rho)}{2-f_1(\rho)f_2(\rho)}$.
\end{corollary}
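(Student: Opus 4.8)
The plan is to exploit the fact that the group multiplication of $G$, regarded as a map $G^2\to G$, is exactly the word map $w_G$ associated with the reduced word $w:=X_1X_2\in\F(X_1,X_2)$, which has length $2$ and $d=2$ variables. Under that identification the hypothesis says precisely that $w_G$ agrees with a homomorphism $G^2\to G$ on at least $\rho|G|^2$ arguments, so Theorem \ref{mainTheo} (with $3d=6$ and the explicit choice $f=f_1\cdot f_2$, and using $w(A,B)=AB$) applies and gives that the word equation
\[
x_1^{-1}y_1z_1x_2^{-1}y_2z_2=x_2^{-1}x_1^{-1}y_1y_2z_1z_2,
\]
which I will call $E$, has at least $f_1(\rho)f_2(\rho)|G|^6$ solutions in $G^6$. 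From here everything reduces to proving that $E$ has \emph{at most} $\cp(G)|G|^6$ solutions, where $\cp(G)$ denotes the commuting probability of $G$: granting that, one gets $\cp(G)\geq f_1(\rho)f_2(\rho)$, and since $f_1(\rho)f_2(\rho)\in(0,1]$ forces $2-f_1(\rho)f_2(\rho)\geq 1$, the asserted (formally weaker) bound $\cp(G)\geq f_1(\rho)f_2(\rho)/(2-f_1(\rho)f_2(\rho))$ follows a fortiori.

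To obtain that upper bound I would first strip off the two variables of $E$ that occur freely. The letter $z_2$ is rightmost on both sides of $E$, so it cancels and can be chosen arbitrarily; hence the number of solutions of $E$ is $|G|$ times the number of solutions of $x_1^{-1}y_1z_1x_2^{-1}y_2=x_2^{-1}x_1^{-1}y_1y_2z_1$. In the latter equation $x_1$ occurs only inside the block $x_1^{-1}y_1$ on each side, so the substitution $p:=x_1^{-1}y_1$ turns $x_1$ into a free variable and produces the equivalent equation $pz_1x_2^{-1}y_2=x_2^{-1}py_2z_1$ in the four variables $p,x_2,y_2,z_1$; multiplying this on the left by $x_2$, on the right by $z_1^{-1}y_2^{-1}$, and rearranging puts it in the form
\[
x_2(pz_1)x_2^{-1}=p\cdot(y_2z_1y_2^{-1}).
\]
Thus the number of solutions of $E$ equals $|G|^2$ times the number of quadruples $(p,x_2,y_2,z_1)\in G^4$ satisfying this last equation.

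The advantage of this shape is that $y_2$ now enters only through the conjugate $y_2z_1y_2^{-1}$. For each fixed $(p,x_2,z_1)$ the left-hand side $h:=x_2(pz_1)x_2^{-1}$ is determined, and the remaining condition $y_2z_1y_2^{-1}=p^{-1}h$ on $y_2$ has either no solution or exactly $|\C_G(z_1)|$ solutions, the latter (a left coset of $\C_G(z_1)$) occurring precisely when $p^{-1}h$ is conjugate to $z_1$. Discarding the conjugacy constraint, the number of solutions of $E$ is therefore at most
\[
|G|^2\sum_{(p,x_2,z_1)\in G^3}|\C_G(z_1)|=|G|^4\sum_{z_1\in G}|\C_G(z_1)|=|G|^4\cdot|\{(a,b)\in G^2\mid ab=ba\}|=\cp(G)|G|^6.
\]
Combining this with the lower bound from Theorem \ref{mainTheo} yields $f_1(\rho)f_2(\rho)|G|^6\leq\cp(G)|G|^6$, hence $\cp(G)\geq f_1(\rho)f_2(\rho)$, which is already stronger than the stated conclusion.

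I do not expect a real obstacle: the substance is only (i) recognizing multiplication as the word map of $X_1X_2$, which makes Theorem \ref{mainTheo} directly applicable, and (ii) noticing that the solutions of the resulting word equation are counted, one variable at a time, by centralizer orders, so that their number collapses to $\sum_{g\in G}|\C_G(g)|=\cp(G)|G|^2$ up to a power of $|G|$ coming from the genuinely free variables. The only step that asks for some care is the bookkeeping in (ii): correctly isolating the two free variables $z_2$ and $x_1$ and rewriting the residual four-variable equation into the conjugation shape $x_2(pz_1)x_2^{-1}=p\cdot(y_2z_1y_2^{-1})$, after which the upper bound is immediate.
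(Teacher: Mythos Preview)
Your argument is correct, and in fact it yields the stronger bound $\cp(G)\geq f_1(\rho)f_2(\rho)$, of which the stated inequality is an immediate consequence. Both your proof and the paper's start by applying Theorem~\ref{mainTheo} with $w=X_1X_2$ to obtain at least $\epsilon|G|^6$ solutions (where $\epsilon:=f_1(\rho)f_2(\rho)$) of the six-variable equation, and both finish via the identity $\sum_{g\in G}|\C_G(g)|=\cp(G)|G|^2$. The difference lies in how the centralizer bound is extracted. The paper first rewrites the equation as $s_1s_2s_1^{-1}=(\text{expression in }t_1,t_2,u_1,s_2)$, then uses an averaging/pigeonhole step to locate a single quadruple $(t_1,t_2,u_1,u_2)$ for which at least $\tfrac{\epsilon}{2-\epsilon}|G|^2$ pairs $(s_1,s_2)$ satisfy the equation, and only then observes that for fixed $s_2$ the admissible $s_1$ lie in a coset of $\C_G(s_2)$; this pigeonhole costs the factor $1/(2-\epsilon)$ and forces a separate treatment of very small $|G|$ to guarantee that a good quadruple exists. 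You instead strip off the genuinely free variables $z_2$ and $x_1$ (via $p=x_1^{-1}y_1$), rewrite the residual four-variable equation as $x_2(pz_1)x_2^{-1}=p\cdot(y_2z_1y_2^{-1})$, and bound the number of admissible $y_2$ by $|\C_G(z_1)|$ for every fixed $(p,x_2,z_1)$; summing directly gives the global upper bound $\cp(G)|G|^6$ with no loss and no case split. Your route is thus both cleaner and sharper.
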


\begin{proof}
Set $\epsilon:=f_1(\rho)f_2(\rho)$. We make a case distinction.

\begin{itemize}
\item Case 1: $|G|<\sqrt[4]{2/\epsilon}$. Then since each element of $G$ commutes with itself, the commuting probability of $G$ is at least $|G|^{-1}>\sqrt[4]{\epsilon/2}$, which is bounded from below by $\epsilon/(2-\epsilon)$ since by the definitions of $f_1$ and $f_2$, we certainly have $\epsilon<\sqrt[3]{1/2}$.

\item Case 2: $|G|\geq\sqrt[4]{2/\epsilon}$. By our proof of Theorem \ref{mainTheo}, applied with $w:=X_1X_2$, we conclude that for at least $\epsilon|G|^6$ many sextuples $(s_1,s_2,t_1,t_2,u_1,u_2)\in G^6$, the equation

\[
s_1^{-1}t_1u_1s_2^{-1}t_2u_2=s_2^{-1}s_1^{-1}t_1t_2u_1u_2
\]

holds, which is equivalent to

\begin{equation}\label{eq1}
s_1s_2s_1^{-1}=t_1t_2u_1t_2^{-1}s_2u_1^{-1}t_1^{-1}.
\end{equation}

We claim that there are at least $\frac{\epsilon}{2}|G|^4$ many quadruples $(t_1,t_2,u_1,u_2)\in G^4$ each having the property that for at least $\frac{\epsilon}{2-\epsilon}|G|^2$ many pairs $(s_1,s_2)\in G^2$, Equation (\ref{eq1}) holds. Indeed, otherwise, we would get that the number of sextuples satisfying Equation (\ref{eq1}) is strictly smaller than $(\frac{\epsilon}{2}+(1-\frac{\epsilon}{2})\cdot\frac{\epsilon}{2-\epsilon})|G|^6=\epsilon|G|^6$, a contradiction. Now by our case assumption, we have that $\frac{\epsilon}{2}|G|^4\geq 1$, so we can fix a quadruple $(t_1,t_2,u_1,u_2)$ with the described property, and then the set $M$ consisting of the at least $\frac{\epsilon}{2-\epsilon}|G|^2$ matching pairs $(s_1,s_2)$ has the following property: For each $s_2\in G$, the set $N_{s_2}$ of all $s_1\in G$ such that $(s_1,s_2)\in M$ is contained in a single coset of the centralizer $\C_G(s_2)$, so $|N_{s_2}|\leq|\C_G(s_2)|$. From this, we infer that

\[
|\{(g,h)\in G^2\mid gh=hg\}|=\sum_{h\in G}{|\C_G(h)|}\geq\sum_{h\in G}{|N_h|}=|M|\geq\frac{\epsilon}{2-\epsilon}|G|^2,
\]

as required.
\end{itemize}
\end{proof}


\begin{thebibliography}{1}

\bibitem{Bor16a}
A.~Bors,
Finite groups with an automorphism inverting, squaring or cubing a non-negligible fraction of elements,
submitted (2016), preprint available on \url{http://www.sfb-qmc.jku.at/fileadmin/publications/Bors_InvSquCub.pdf}.

\bibitem{GR06a}
R.M.~Guralnick and G.R.~Robinson,
On the commuting probability in finite groups,
\emph{J. Algebra} \textbf{300} (2006), 509--528.

\bibitem{Heg03a}
P.~V.~Hegarty,
On a conjecture of Zimmerman about group automorphisms,
\emph{Arch. Math. (Basel)} \textbf{80} (2003) 1--11.

\bibitem{Heg09a}
P.~V.~Hegarty,
Finite groups with an automorphism cubing a large fraction of elements,
\emph{Math. Proc. R. Ir. Acad.} \textbf{109A}(1) (2005) 79--99.

\bibitem{LS17a}
M.~Larsen and A.~Shalev,
Words, Hausdorff dimension and randomly free groups,
preprint (2017), \url{https://arxiv.org/abs/1706.08226}.

\bibitem{Lie73a}
H.~Liebeck,
Groups with an automorphism squaring many elements,
\emph{J. Austral. Math. Soc.} \textbf{16} (1973) 33--42.

\bibitem{Mac75a}
D.~MacHale,
Groups with an automorphism cubing many elements,
\emph{J. Austral. Math. Soc.} \textbf{20}(2) (1975) 253--256.

\bibitem{Man17a}
A.~Mann,
Groups satisfying identities with high probability,
preprint (provided to the author through private communication).

\bibitem{GPa}
V.~Naik,
\emph{Groupprops, The Group Properties Wiki (beta). Automorphism sends more than three-fourths of elements to inverses implies abelian},
\url{http://groupprops.subwiki.org/wiki/Automorphism_sends_more_than_three-fourths_of_elements_to_inverses_implies_abelian} (2011).

\bibitem{Pot88a}
W.~M.~Potter,
Nonsolvable groups with an automorphism inverting many elements,
\emph{Arch. Math. (Basel)} \textbf{50}(4) (1988) 292--299.

\end{thebibliography}
\end{document}